\documentclass[12pt]{article}
\usepackage{mathrsfs}
\usepackage{epic,eepic,epsf,epsfig}
\usepackage{amsfonts,srcltx,mathrsfs}
\usepackage{multirow}
\usepackage{amsmath}
\usepackage{amssymb}
\usepackage{amsbsy}
\usepackage{graphicx}
\usepackage{amsfonts}
\usepackage{color}
\usepackage{setspace}

\newtheorem{lem}{Lemma}[section]%
\newtheorem{theorem}[lem]{Theorem}%
\newtheorem{cor}[lem]{Corollary}%

\def\nd{\mathrel{\bigm|\kern-.7em/}}

\def\f{\noindent}

\def\P\GammaL{\hbox{\rm P\GammaL}}

\begin{document}
\title{Spectral radius and edge-disjoint connected factors of graphs}

\footnotetext{* Corresponding author}
\footnotetext{E-mails: 13021531326@163.com; zhangwq@pku.edu.cn}

\author{Xinying Tang, Wenqian Zhang*\\
{\small School of Mathematics and Statistics, Shandong University of Technology}\\
{\small Zibo, Shandong 255000, P.R. China}}
\date{}
\maketitle

\begin{abstract}
For a graph $G$, the spectral radius of $G$ is the largest eigenvalue of its adjacency matrix. A connected factor of $G$ is a connected spanning subgraph of $G$. For example, a spanning tree of $G$ is a 1-connected factor of $G$. Let $G$ be a graph of order $n$ with minimum degree $\delta\geq6$, where $n\geq3\delta$. In this paper, we give a sharp spectral radius condition for $G$ to contain  $k$ edge-disjoint 2-connected factors and $\left\lfloor\frac{\delta-4k}{2}\right\rfloor$ edge-disjoint spanning trees, where $1\leq k\leq\left\lfloor\frac{\delta}{4}\right\rfloor$ is an integer.
\bigskip

\f {\bf Keywords:} spectral radius; eigenvalue; edge-disjoint spanning tree; connected factor; rigid graph.\\
{\bf 2020 Mathematics Subject Classification:} 05C50.

\end{abstract}

\baselineskip 17 pt

\section{Introduction}

All graphs considered in this paper are simple graphs.
For a graph $G$, let $\overline{G}$ denote its complement graph. The vertex set and edge set of $G$ are denoted by $V(G)$ and $E(G)$ respectively. Set $e(G)=|E(G)|$. For $B\subseteq V(G)$, let $G[B]$ be the subgraph induced by $B$, and let $G-B=G[V(G)-B]$. Set $e_{G}(B)=|E(G[B])|$. For two disjoint subsets $U,V\subseteq V(G)$, let $e_{G}(U,V)$ denote the number of edges between $U$ and $V$ in $G$.  For a vertex $u\in V(G)$, let $d_{G}(u)$ denote the {\em degree} of $u$. Let $\delta(G)$ denote the minimum degree of $G$. For integer $n\geq1$, let $K_{n}$ be the complete graph of order $n$.  For two graphs $G_{1}$ and $G_{2}$, let $G_{1}\cup G_{2}$ denote the disjoint union of them, and let $G_{1}\vee G_{2}$ denote the join of them, i.e., the graph obtained from $G_{1}\cup G_{2}$ by adding all the edges between $V(G_{1})$ and $V(G_{2})$. 

Let $G$ be a graph with vertices $u_{1},u_{2},...,u_{n}$. The adjacency matrix $A(G)$ of $G$ is  $(a_{ij})_{n\times n}$, where $a_{ij}=1$ if $u_{i}$ is adjacent to $u_{j}$, and $a_{ij}=0$ otherwise. For any $1\leq i\leq n$, let $\lambda_{i}(G)$ denote the $i$-th largest eigenvalue of $A(G)$ (or $G$). The spectral
radius of $G$ is the largest eigenvalue $\lambda_{1}(G)$. By Perron-Frobenius Theorem, $\lambda_{1}(G)$ has non-negative eigenvectors (called Perron vectors). Moreover, $\lambda_{1}(G)$ is a simple root and has a positive eigenvector, when $G$ is connected. Let $D(G)$ denote the degree diagonal matrix of $G$. The Laplacian matrix of $G$ is defined as $L(G)=D(G)-A(G)$. The {\em algebraic connectivity} $\mu_{2}(G)$ of $G$  is the second smallest eigenvalue of $L(G)$. It is known that $G$ is connected if and only if $\mu_{2}(G)>0$.

Let $G$ be a connected graph. A {\em connected factor} of $G$ is a connected spanning subgraph. Clearly, a spanning tree of $G$ is a 1-connected factor. Let $\tau(G)$ denote the maximum number of edge-disjoint spanning trees
contained in  $G$.  Motivated by Kirchhoff's matrix tree theorem (see \cite{Kir})  and  a question of Seymour (see \cite{CW}), Cioab\u{a} and Wong \cite{CW} studied the following problem.

\medskip

\f{\bf Problem 1.} {\rm (\cite{CW})} Let $G$ be a connected graph. Determine the relationship between $\tau(G)$ and eigenvalues of $G$.

\medskip

The crucial tool for Problem 1 is the  famous Tree Packing Theorem due to  Tutte and Nash‐Williams (see \cite{Nash,T}). Cioab\u{a} and Wong \cite{CW} initiated the
study of the relationship between $\tau(G)$ and $\lambda_{2}(G)$ for $d$-regular graphs $G$. They \cite{CW} proved $\tau(G)\geq k$ if $\lambda_{2}(G)<d-\frac{2(2k-1)}{d+1}$, and conjectured that the condition can be released to $\lambda_{2}(G)<d-\frac{2k-1}{d+1}$. Note that this conjecture is essentially tight (see \cite{COPP}). This conjecture was extended in \cite{GLLY}, and partially confirmed in \cite{LS,LHL} and completely confirmed in\cite{LHGL}. 
Recently, the spectral radius (and other eigenvalue) conditions for edge-disjoint spanning trees of graphs were extensively studied (see \cite{CZ,DWL,FGL,GW,GW2,HWD,LLT,ZF}).

For the study of edge-disjoint  2-connected factors of graphs, we first introduce the concept of rigidity of graphs. Arising from mechanics, {\em rigidity} is the property of a structure that does not flex, and has been extensively studied in discrete
geometry and combinatorics, and has broad applications in material science, engineering and biological sciences (see \cite{CSC,GC,GSS,J1,ZFBG}). 
A $d$-dimensional bar-and-joint framework $(G,p)$ is the combination of a graph $G=(V(G),E(G))$ and a map $p: V(G)\rightarrow \mathbb{R}^{d}$ that assigns a point in $\mathbb{R}^{d}$ to each vertex of $G$. Two frameworks $(G,p)$ and $(G,q)$ are  {\em equivalent} (or {\em congruent}) if $\|p(u)-p(v)\|=\|q(u)-q(v)\|$ for any $uv\in E(G)$ (or for all pairs $u,v\in V(G)$), where $\|\cdot\|$ denote the Euclidean norm in $\mathbb{R}^{d}$. A framework $(G,p)$ is {\em generic} if the coordinates of its points are algebraically independent over $\mathbb{Q}$. The framework $(G,p)$ is {\em rigid} in $\mathbb{R}^{d}$ if there exists an $\epsilon>0$ such that every framework $(G,q)$ equivalent to $(G,p)$ satisfying $\|p(u)-q(u)\|<\epsilon$ for any $u\in V(G)$ is congruent to $(G,p)$.
Asimov and Roth \cite{AR} proved that there is a generic framework $(G, p)$ which is rigid in $\mathbb{R}^{d}$ if and only if every generic framework of $G$ is rigid in $\mathbb{R}^{d}$.
Thus, the generic rigidity is inherent properties of a graph. A graph $G$ called rigid in $\mathbb{R}^{d}$, if there is a generic framework of $G$ which is rigid in $\mathbb{R}^{d}$.

In this paper, we only consider rigid graphs in $\mathbb{R}^{2}$. In 1970, Laman \cite{Laman} gave a combinatorial characterization for these graphs. on rigidity of graphs . From Laman's characterization, it is not hard to see that every rigid graph in $\mathbb{R}^{2}$ with at least $3$ vertices is $2$-connected. The connectivity conditions for edge-disjoint rigid spanning subgraphs in graphs were extensively studied (see \cite{CDGS,G,G1,JJ1,J,LY}).

Cioab\u{a}, Dewar and  Gu \cite{CDG} gave algebraic connectivity conditions for edge-disjoint rigid spanning subgraphs in graphs.

\begin{theorem}{\rm (\cite{CDG})}\label{algebraic condi}
Let $G$ be a graph with minimum degree $\delta(G)\geq6k$. If\\
$(1)$ $\mu_{2}(G)>\frac{6k-1}{\delta(G)+1}$,\\
$(2)$ $\mu_{2}(G-u)>\frac{4k-1}{\delta(G-u)+1}$ for every $u\in V(G)$, and\\
$(3)$ $\mu_{2}(G-v-w)>\frac{2k-1}{\delta(G-v-w)+1}$ for every $v,w\in V(G)$,\\
then $G$ contains at least $k$ edge-disjoint spanning rigid subgraphs.
\end{theorem}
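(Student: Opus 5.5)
Theorem~\ref{algebraic condi} is a result of Cioab\u{a}, Dewar and Gu, and my plan follows the standard route for such statements: a packing theorem for rigidity matroids, combined with an eigenvalue bound on edges leaving vertex sets.

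\emph{Step 1: the combinatorial criterion.} I will use the partition-type characterization of $k$ edge-disjoint spanning rigid subgraphs in $\mathbb{R}^{2}$. By Laman's theorem and the Nash-Williams covering/packing theory for the $2$-dimensional rigidity matroid (as in the work of Jackson--Jord\'an and Gu), it suffices to show the following. For every partition $\mathcal{P}=\{V_1,\dots,V_t\}$ of $V(G)$, and every choice of which parts are ``small'' (singletons or pairs, which contribute less rank), the number of cross edges $e(\mathcal{P})$ satisfies a Laman-type inequality. Roughly, parts with at least $3$ vertices contribute $2|V_i|-3$, a singleton contributes $0$, and a pair contributes $1$, so the requirement is
\[
k\bigl(2n-3\bigr)\le e_G(\mathcal P)+k\sum_i r(V_i),
\]
with $r$ the rank of the rigidity matroid on the induced part. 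Since complete parts are rigid, the deficit can only come from cross edges. The task therefore reduces to proving that $e_G(\mathcal{P})\ge 3k(t-1)$-type bounds hold, with corrections for singleton and pair parts: each singleton needs about $4k$ incident cross edges, and each pair about $2k$ more.

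\emph{Step 2: edge-boundary bounds from $\mu_2$.} For a part $X$ with $|X|\le n/2$, the standard bound is
\[
e(X,\overline X)\ge \mu_2|X|(n-|X|)/n.
\]
Combined with $e(X,\overline X)\ge |X|(\delta-|X|+1)$, this gives the known lemma (Cioab\u{a}--Wong): if $\mu_2>\frac{2r-1}{\delta+1}$ then $e(X,\overline X)\ge r$ for every nonempty proper $X$. Applying this with $r=6k$ to $G$ gives $e(X,\overline X)\ge 6k$ for every part. Summing over parts yields $e(\mathcal P)\ge 3kt$, which handles partitions with all parts large.

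\emph{Step 3: handling small parts.} For singleton and pair parts the required boundary is larger relative to their rank contribution. Here conditions (2) and (3) enter. Deleting a vertex $u$, or two vertices $v,w$, and applying the same lemma in $G-u$ (with $r=4k$) or $G-v-w$ (with $r=2k$) controls edges of parts that avoid the removed vertices. I would charge the cross edges incident to singletons and pairs separately, and use the edge-connectivity of the reduced graphs to guarantee the remaining structure still supplies $4k$ resp.\ $2k$ edges per part.

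The main obstacle is Step 3: the exact bookkeeping that matches the rigidity-rank deficits of small parts to the three connectivity levels $6k,4k,2k$. My first attempt will be the inductive argument of Gu's proof: remove one or two vertices of a small part and apply the edge-connectivity bound in the reduced graph.
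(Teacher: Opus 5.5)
The paper only quotes this theorem from Cioab\u{a}--Dewar--Gu, so there is no in-paper proof to compare against. On its own terms your proposal has a genuine gap, and in fact the statement as transcribed here is false, so no proof can succeed.

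\textbf{Step~2 misreads the thresholds.} In the form you quote, $\mu_2>\frac{2r-1}{\delta+1}\Rightarrow e(X,\overline{X})\ge r$, hypothesis (1) is the case $r=3k$, not $r=6k$; getting $6k$ would need $\mu_2>\frac{12k-1}{\delta+1}$. Likewise (2) and (3) give only $2k$- and $k$-edge-connectivity of $G-u$ and $G-v-w$.

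This cannot be recovered later. Take two copies of $K_m$ joined by a $3k$-edge matching. Then $\mu_2=\frac{12k}{m+2+\sqrt{(m+2)^2-24k}}>\frac{6k-1}{m}=\frac{6k-1}{\delta+1}$ once $m\ge 12k-2$, and (2), (3) also hold for large $m$. Yet the graph has a $3k$-edge cut. Moreover, $3k$ boundary edges per part only give $e(\mathcal{P})\ge 3kt/2$, which is less than $3k(t-1)$ for $t\ge 3$.

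\textbf{The conclusion itself fails under these thresholds.} Take $k=1$ and eight copies $B_1,\dots,B_8$ of $K_M$ arranged cyclically. Let $B_i\cap B_{i+1}=\{w_i\}$ (indices mod $8$), with no other overlaps. Add four chords, each joining a non-hinge vertex of $B_a$ to a non-hinge vertex of $B_{a+4}$ ($a=1,\dots,4$), with all endpoints distinct.

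\begin{itemize}
\item Then $n=8M-8$, and subadditivity of the rigidity rank gives $r_{\mathcal{R}}(E)\le 8(2M-3)+4=2n-4$. So $G$ is not rigid.
\item Hypotheses (1) and (2) hold for large $M$: $\mu_2(G)$ and $\mu_2(G-u)$ stay bounded away from $0$ as $M\to\infty$, since these are a cycle or path of cliques glued at single vertices.
\item For (3), the worst pair is the two hinges of one body, say $\{w_8,w_1\}$. Then $B_1-\{w_8,w_1\}$, a clique of order $a=M-2$, hangs by a single chord from a well-connected rest of order $b=7M-8$.
\item An explicit computation gives, for large $M$, $\frac1a<\mu_2(G-w_8-w_1)<\frac1a+\frac1b$, while $\delta(G-w_8-w_1)+1=a$. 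Every other pair of hinges leaves at least two chords across the cut.
\end{itemize}

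So (1)--(3) all hold for large $M$. The defect is that (3) only forces $G-v-w$ to be $k$-edge-connected, whereas $2k$ is what the combinatorics needs.

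\textbf{What your architecture does prove} is the version with doubled thresholds:
$\mu_2(G)>\frac{2(6k-1)}{\delta+1}$, $\mu_2(G-u)>\frac{2(4k-1)}{\delta(G-u)+1}$, and $\mu_2(G-v-w)>\frac{2(2k-1)}{\delta(G-v-w)+1}$.

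\begin{itemize}
\item Use the sharp cut lemma: if $\delta(H)\ge r$ and $\mu_2(H)>\frac{2(r-1)}{\delta(H)+1}$, then $H$ is $r$-edge-connected. Indeed, a side with at most $\delta$ vertices has at least $\delta$ boundary edges. Otherwise both sides have at least $\delta+1$ vertices, and $\mu_2\le e(X,\overline{X})\bigl(\frac{1}{|X|}+\frac{1}{|\overline{X}|}\bigr)$.
\item With $\delta\ge 6k$, this makes $G-Z$ $(6k-2k|Z|)$-edge-connected for every $Z$ with $|Z|\le 2$.
\item The combinatorial half is then Gu's theorem: every $(6k,2k)$-connected graph contains $k$ edge-disjoint spanning rigid subgraphs. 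That theorem is precisely what your Step~3 leaves open.
\end{itemize}

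\textbf{Your Step~1 cannot replace Gu's theorem.} The rank formula $(*1)$ minimizes over families of vertex sets (stones) whose induced edge sets partition $F$, and stones may share vertices. A vertex-partition test never sees stones glued at one or two vertices.

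In the ring above, count parts as rigid blocks, as your ``deficit only from cross edges'' suggests. Then every partition passes, because separating consecutive bodies at a hinge already costs $M-1$ cross edges. If you use true ranks instead, the trivial partition is just the conclusion, so the test is circular.

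The vertices lying in several stones (the set $W$ in Section~4) are where $G-u$ and $G-v-w$ enter. They do not enter through singleton or pair parts, which $\delta\ge 6k$ already handles.
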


The following corollary is directly deduced from Theorem \ref{algebraic condi}.

\begin{cor}{\rm (\cite{CDG})}\label{algebraic cor}
Let $G$ be a graph with minimum degree $\delta\geq6k$. If
$\mu_{2}(G)>2+\frac{2k-1}{\delta(G)-1}$,
then $G$ contains at least $k$ edge-disjoint spanning rigid subgraphs.
\end{cor}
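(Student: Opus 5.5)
The plan is to derive the corollary directly from Theorem \ref{algebraic condi} by checking its three hypotheses. We use two standard facts about algebraic connectivity. First, deleting a vertex lowers $\mu_2$ by at most one: $\mu_{2}(G-u)\geq \mu_{2}(G)-1$ for every $u\in V(G)$. This follows from Cauchy-type interlacing for the Laplacian, since $L(G-u)$ is obtained from the principal submatrix of $L(G)$ indexed by $V(G)-u$ by subtracting the diagonal matrix that records the edges to $u$, and that diagonal matrix has entries in $\{0,1\}$. Applying the fact twice gives $\mu_{2}(G-v-w)\geq \mu_{2}(G)-2$. Second, deleting vertices changes minimum degrees by a bounded amount: $\delta(G-u)\geq \delta-1$ and $\delta(G-v-w)\geq \delta-2$.

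With $\delta=\delta(G)\geq 6k$ and $\mu_{2}(G)>2+\frac{2k-1}{\delta-1}$, we check the conditions in order.

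For condition (3), $\mu_{2}(G-v-w)\geq\mu_{2}(G)-2>\frac{2k-1}{\delta-1}=\frac{2k-1}{(\delta-2)+1}\geq\frac{2k-1}{\delta(G-v-w)+1}$.

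For condition (2), $\mu_{2}(G-u)>1+\frac{2k-1}{\delta-1}$. It suffices to show $1+\frac{2k-1}{\delta-1}\geq\frac{4k-1}{\delta}$, which holds because $\delta\geq 4k$ gives $\frac{4k-1}{\delta}<1$. Then use $\delta(G-u)+1\geq\delta$.

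For condition (1), $\mu_{2}(G)>2>1\geq\frac{6k-1}{\delta+1}$ since $\delta\geq 6k$.

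All three hypotheses hold, so Theorem \ref{algebraic condi} gives $k$ edge-disjoint spanning rigid subgraphs.

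The only non-arithmetic step is the vertex-deletion inequality for $\mu_2$; everything else is comparing fractions.
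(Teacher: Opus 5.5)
Your proposal is correct and follows the same route as the paper, which only remarks that the corollary is deduced directly from Theorem \ref{algebraic condi}. You fill in the omitted details soundly: the bound $\mu_2(G-u)\geq\mu_2(G)-1$ (Fiedler's vertex-deletion bound, justified by Cauchy interlacing plus the Weyl inequality), the minimum-degree drops, and the routine fraction comparisons all check out.
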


In this paper, we will gave a sharp spectral radius condition for edge-disjoint spanning rigid subgraphs and trees in graphs. Note that the sharp spectral radius  condition for  2-connected graphs to be rigid was given by Fan, Huang and Lin \cite{FHL}. As the main result of this paper, we proved the following Theorem \ref{main}.

\medskip

For integers $k\geq1,\delta\geq4k$ and $n\geq3\delta$, let $G_{n,\delta,k}$ be the graph obtained from $K_{1}\vee(K_{\delta}\cup K_{n-1-\delta})$ by connecting one vertex in $V(K_{\delta})$ to $(k-1)$ vertices in $V(K_{n-1-\delta})$.

\begin{theorem}\label{main}
Assume that $\delta\geq6$ and $n\geq3\delta$ are two integers. Let $G$ be a graph of order $n$ with minimum
degree $\delta$. For any integer $k$ with $1\leq k\leq\frac{\delta}{4}$, if $\lambda_{1}(G)\geq\lambda_{1}(G_{n,\delta,k})$, then $G$ contains
edge-disjoint $k$ spanning rigid subgraphs and $\left\lfloor\frac{\delta-4k}{2}\right\rfloor$ spanning trees unless $G=G_{n,\delta,k}$.
\end{theorem}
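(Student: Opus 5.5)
We argue by contradiction, using a combinatorial partition criterion for packing $k$ rigid spanning subgraphs together with $t=\lfloor\frac{\delta-4k}{2}\rfloor$ spanning trees, and then spectral comparison. The packing criterion is a combination of Nash-Williams--Tutte with the Laman-type count of rigidity matroids (as in Gu and others, see \cite{G,CDGS}). In simplified form, $G$ contains $k$ edge-disjoint rigid spanning subgraphs and $t$ edge-disjoint spanning trees provided that for every partition $\mathcal{P}=\{V_1,\dots,V_p\}$ of $V(G)$ (possibly with a refined version allowing trivial parts and small deletions) the number of cross edges satisfies
\[ e_G(\mathcal{P})\geq (2k+t)(p-1)+k\cdot(\text{correction for parts of size }1). \]
Roughly, each vertex class needs $3k$ per singleton-type part and $2k$ per nontrivial part in the rigidity count. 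I will take the version used for such ``rigid plus trees'' packings. Assuming $G$ has no such packing, we obtain a partition $\mathcal{P}$ violating the count.

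The first step is to show that a violating partition forces the structure of $G$. We split the parts into small parts ($|V_i|\le \delta$) and large ones. A part $V_i$ with $|V_i|\le\delta$ sends at least $|V_i|(\delta-|V_i|+1)\geq\delta$ edges out. Summing degrees, $2e_G(\mathcal P)\geq\sum_i e(V_i,\overline{V_i})$, and each small part contributes at least $\delta$ since $\delta\ge 4k+2t$. Comparing with the upper bound $(2k+t)(p-1)+\dots$ then shows $p=2$ or that all but one part are tiny. This leaves $G$ spanned by a cut of fewer than about $2k+t\le \delta/2+\dots$ edges (or a near-cut after deleting a vertex, which accounts for the rigidity defect). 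Concretely, we aim to show $G$ is a spanning subgraph of a graph of the form $K_1\vee(K_{a}\cup K_{n-1-a})$ plus at most $k-1$ extra edges between the sides, with $a\ge\delta$ forced by the minimum degree. The ``$K_1\vee$'' comes from the rigidity count, since removing one vertex and leaving fewer than $k$ edges kills $k$ rigid subgraphs. The extra $t$ tree demand is absorbed because $\delta-4k\geq 2t$ makes the tree part of the count non-binding relative to the rigidity part.

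The second step is spectral. Among all graphs of the above shape, we show $\lambda_1$ is maximized by $a=\delta$ with all $k-1$ extra edges present. This gives $G_{n,\delta,k}$ exactly, so $\lambda_1(G)\le\lambda_1(G_{n,\delta,k})$ with equality only if $G=G_{n,\delta,k}$, a contradiction. For the monotonicity in $a$ we use the standard technique: compare characteristic polynomials or quotient matrices of the equitable partition, or use the Rayleigh quotient with the Perron vector and edge-shifting (Kelmans-type moves). Moving vertices from the smaller clique to the larger one increases $\lambda_1$ when $n\ge3\delta$, and bounding $\lambda_1(G_{n,\delta,k})>n-\delta-1$ helps rule out the alternative configurations from the first step. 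Other violating configurations, for instance with several small parts or a 2-vertex deletion, give graphs whose spectral radius is at most $\lambda_1(K_{\delta}\cup K_{n-\delta})$ plus a small amount. We would show these are below $\lambda_1(G_{n,\delta,k})\ge n-\delta-1+\epsilon$ via estimates like $\lambda_1\le \sqrt{2e-n+1}$-type (Hong's bound) together with edge counting. This works because $n\ge 3\delta$ makes the cost of a missing $\delta(n-\delta)$-size block large.

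The main obstacle is the first step. We need to pin down the right partition criterion for simultaneous rigid/tree packing and to carry out the case analysis showing that every violating partition yields a graph spectrally dominated by $G_{n,\delta,k}$; multi-part partitions and the rigidity correction term are where the counting is delicate. I would first treat $p=2$ with one part of size in $[\delta+1-?,\,n/2]$. I would then reduce general $p$ to it by showing that merging parts preserves the violation when all but one part are small, using $\delta\ge6$ to handle the boundary cases.
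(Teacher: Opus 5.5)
There is a genuine gap in your Step 1, which carries the entire combinatorial content of the theorem. You defer to ``the version used for such packings'', but no vertex-partition criterion of Nash-Williams--Tutte type exists for $k$ rigid plus $t$ tree packings. In Edmonds' formula the rigidity rank $r_{\mathcal{R}}(F)=\min\sum_i(2|X_i|-3)$ is taken over vertex sets $X_i$ covering $F$, and these sets may share vertices. So partition counts are only necessary conditions: two large cliques glued at one vertex pass all of them but are not rigid. Even the necessary count is reversed in your sketch, since a nontrivial part costs $3k$ and a singleton $2k$.

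The known sufficient criteria that cope with overlaps are Gu-type edge-connectivity conditions and Theorem~\ref{algebraic condi}. They assume $6k$-edge-connectivity or $\delta\ge 6k$, together with connectivity of every $G-v-w$. Neither is available here: $\delta$ may be as small as $4k$, and $G-v-w$ may be disconnected.

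For example, $K_2\vee(K_{\delta-1}\cup K_{n-\delta-1})$ has minimum degree $\delta$ and a $2$-vertex cut. It also contains $K_{n-\delta+1}$, so $\lambda_1>n-\delta>\lambda_1(G_{n,\delta,k})$. Hence configurations of your ``2-vertex deletion'' type cannot be dismissed spectrally; they must be shown to contain the packing. Your proposed use of Hong's bound also fails at this scale. On $\mathcal{G}_{n,\delta,k}$ it only yields $\lambda_1\le\sqrt{(n-\delta-1)^2+\delta^2+2k-2}$, which is not below $n-\delta-1$. You need the minimum-degree bound of Lemma~\ref{inequality}.

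That bound is what the paper exploits. Combined with $\lambda_1(G)>n-1-\delta$ it gives $2e(\overline{G})<2\delta n-2\delta^2-\delta$. From this, $G$ is $\delta$-edge-connected and every $G-u$ is $k$-edge-connected. The only exception is a cut of $G-u$ with fewer than $k$ edges and one side of exactly $\delta$ vertices. In that case $G$ is a subgraph of a member of $\mathcal{G}_{n,\delta,k}$, and Lemma~\ref{set-extremal} forces $G=G_{n,\delta,k}$.

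Two side remarks on your Step 2. Both sides larger than $\delta$ is already excluded by the edge count, so no monotonicity in $a$ is needed. Lemma~\ref{set-extremal}, which places all $k-1$ cross edges at one vertex of $K_\delta$, is also needed in your argument, since ``all $k-1$ extra edges present'' does not determine the graph.

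The paper then proves the packing directly from Edmonds' formula:
\begin{itemize}
\item a minimal violating $F$ is split into stones of size at least $3$;
\item vertices lying in several stones are weighted by $1/r_u$ to absorb the overlaps;
\item the complement bound shows that every loose stone has at least $\delta$ vertices and that there are at most two of them;
\item $\delta$-edge-connectivity then handles whole components;
\item $k$-edge-connectivity of $G-u$ handles stones attached to the rest through a single vertex, which is exactly the obstruction realized by $G_{n,\delta,k}$.
\end{itemize}
Without an argument of this kind, your reduction from a violation to $G\subseteq K_1\vee(K_a\cup K_{n-1-a})$ plus at most $k-1$ edges remains unproved.
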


Now we point out that the extremal graph $G_{n,\delta,k}$ in Theorem \ref{main} does not contain $k$ edge-disjoint spanning rigid subgraphs. In fact, since a rigid graph $G$ is $2$-connected, we see that $G-u$ is connected (and thus contains a spanning tree) for any $u\in V(G)$. Suppose that  $G_{n,\delta,k}$ contains $k$ edge-disjoint spanning rigid subgraphs. Then the graph obtained from $G_{n,\delta,k}$ by deleting the vertex in $V(K_{1})$ contains $k$ edge-disjoint spanning trees. However, this is not possible from the construction of $G_{n,\delta,k}$.

 The rest of this paper is organized as follows. In Section 2, we present some spectral tools. In section 3, we will prove a useful lemma.  In Section 4, we first introduce some applications of matroid theory in the study of edge-disjoint spanning subgraphs, and then give the proof of Theorem \ref{main}.

\section{Spectral tools}

In this section, we will introduce some spectral tools. The first Lemma is taken from Theorem 8.1.1 of \cite{CRS}, and the second one is a variation (with a
very similar proof) of Theorem 8.1.3 of \cite{CRS}. 

\begin{lem} {\em (\cite{CRS})}\label{subgraph}
If $H$ is a subgraph of a connected graph $G$, then $\lambda_{1}(H)\leq\lambda_{1}(G)$, with equality if and only if $H=G$.
\end{lem}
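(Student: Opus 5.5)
We would prove this classical monotonicity statement through the Rayleigh quotient characterization of $\lambda_{1}$ together with the Perron--Frobenius theorem, as stated in the introduction. We may assume $H$ is a spanning subgraph of $G$. If $H$ has fewer vertices, we add the missing vertices of $G$ as isolated vertices. This changes neither $\lambda_{1}(H)$ (it only appends zero eigenvalues, and $\lambda_{1}(H)\geq0$) nor whether $H=G$. So $A(H)$ and $A(G)$ are both $n\times n$, and $A(G)-A(H)$ is a nonnegative symmetric $0/1$ matrix.

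For the inequality, let $x$ be a nonnegative unit Perron vector of $H$, which exists by Perron--Frobenius. Then
\[
\lambda_{1}(H)=x^{T}A(H)x\leq x^{T}A(G)x\leq\max_{\|y\|=1}y^{T}A(G)y=\lambda_{1}(G).
\]
The first inequality holds because the difference equals $2\sum_{uv\in E(G)\setminus E(H)}x_{u}x_{v}$, which is nonnegative since $x\geq0$.

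For the equality case, suppose $\lambda_{1}(H)=\lambda_{1}(G)$. Then $x$ attains the Rayleigh maximum for $A(G)$, so $A(G)x=\lambda_{1}(G)x$ and $x$ is an eigenvector of $G$ for $\lambda_{1}(G)$. Since $G$ is connected, $\lambda_{1}(G)$ is simple with a positive eigenvector, so $x$ (being nonnegative and nonzero) is positive. Equality in the first inequality then forces $\sum_{uv\in E(G)\setminus E(H)}x_{u}x_{v}=0$ with every term positive, so $E(G)=E(H)$.

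It remains to rule out $H$ having fewer vertices than $G$ while still forcing $H=G$. If vertices had been added in the reduction, they are isolated in the padded $H$ but not isolated in the connected graph $G$ (assuming $|V(G)|\geq2$; the trivial case is immediate). Hence $E(G)\neq E(H)$, contradicting the above. Therefore $H=G$, and the converse direction is trivial.

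The main obstacle is this equality argument: it needs the positivity of the Perron vector of $G$, which is exactly where connectedness of $G$ is used. It also needs the small bookkeeping step for non-spanning $H$.
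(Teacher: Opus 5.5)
Your proof is correct: padding $H$ with isolated vertices, comparing Rayleigh quotients at a nonnegative unit Perron vector of $H$, and noting that equality makes this vector a $\lambda_{1}(G)$-eigenvector, hence positive because $G$ is connected, forces $E(H)=E(G)$ and then $V(H)=V(G)$. The paper gives no proof of this lemma and simply quotes it from Theorem 8.1.1 of Cvetkovi\'c, Rowlinson and Simi\'c, and your argument is essentially the standard textbook proof of that result.
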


\begin{lem} {\em (\cite{CRS})}\label{trans}
Let $G$ be a connected graph with a Perron vector $\mathbf{x}=(x_{z})_{z\in V(G)}$. Assume that $uv$ is an edge and $u'v'$ is not an edge of $G$.
Let $G'$ be
the graph obtained from $G$ by deleting the edge $uv$ and adding the non-edge
 $u'v'$. If $x_{u}x_{v}\leq x_{u'}x_{v'}$, then $\lambda_{1}(G')>\lambda_{1}(G)$.
\end{lem}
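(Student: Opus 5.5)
The statement to prove is Lemma \ref{trans}. I will use the Rayleigh quotient characterization of the spectral radius together with the Perron--Frobenius facts recalled in the introduction.

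Let $\mathbf{x}$ be a unit Perron vector of $G$. Since $G$ is connected, $\mathbf{x}>0$ entrywise and $\lambda_{1}(G)=\mathbf{x}^{T}A(G)\mathbf{x}$. Removing $uv$ and adding $u'v'$ changes only four entries of the adjacency matrix, so
\[
\mathbf{x}^{T}A(G')\mathbf{x}=\mathbf{x}^{T}A(G)\mathbf{x}-2x_{u}x_{v}+2x_{u'}x_{v'}\geq \lambda_{1}(G).
\]
By the Rayleigh principle for the symmetric matrix $A(G')$, this gives $\lambda_{1}(G')\geq \mathbf{x}^{T}A(G')\mathbf{x}\geq\lambda_{1}(G)$.

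To get strict inequality, suppose $\lambda_{1}(G')=\lambda_{1}(G)$. Then both inequalities above are equalities. Equality in the Rayleigh quotient forces $\mathbf{x}$ to be an eigenvector of $A(G')$ for $\lambda_{1}(G')$, that is, $A(G')\mathbf{x}=\lambda_{1}(G)\mathbf{x}=A(G)\mathbf{x}$. Hence
\[
(A(G')-A(G))\mathbf{x}=\mathbf{0}.
\]

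The main obstacle is extracting a contradiction from this equation, which I do coordinatewise. The edges $uv$ and $u'v'$ are distinct pairs, so some endpoint of $u'v'$, say $u'$, lies outside $\{u,v\}$, or else $v'$ does. Take such a vertex, say $u'\notin\{u,v\}$. The only change at row $u'$ is the added edge $u'v'$, so the $u'$-coordinate of $(A(G')-A(G))\mathbf{x}$ equals $x_{v'}>0$. This contradicts the displayed equation.

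The same argument applies if the vertex outside $\{u,v\}$ is $v'$ instead. The argument does not use connectivity of $G'$, only positivity of $\mathbf{x}$, which comes from the connectivity of $G$. Therefore $\lambda_{1}(G')>\lambda_{1}(G)$.
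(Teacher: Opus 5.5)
Your proof is correct and matches the route the paper intends. The paper gives no proof of its own; it defers to Theorem 8.1.3 of \cite{CRS}, whose argument is the same as yours: compare Rayleigh quotients to get $\lambda_{1}(G')\geq\lambda_{1}(G)$, note that equality would force $\mathbf{x}$ to be an eigenvector of $A(G')$, and derive a contradiction at a vertex where only the added edge changes the row. Your remark that $\{u',v'\}\neq\{u,v\}$ guarantees such a vertex, and that positivity of $\mathbf{x}$ (from connectivity of $G$, not of $G'$) is all that is needed, correctly handles the only point where this edge-swap variant differs from the cited theorem.
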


The following lemma can be found in \cite{HSF,N}.

 \begin{lem} {\em (\cite{HSF,N})}\label{inequality}
 Let $G$ be a graph of order $n$ with $m$ edges and minimum degree $\delta$. Then
 $\lambda_{1}(G)\leq\frac{\delta-1}{2}+\sqrt{2m-\delta n+\frac{(\delta+1)^{2}}{4}}$.
\end{lem}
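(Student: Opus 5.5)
The statement is the classical Hong–Shu–Fang / Nikiforov bound, and it is cited from \cite{HSF,N}. If a self-contained argument is wanted, I would set $\lambda=\lambda_{1}(G)$ and aim to prove the equivalent quadratic inequality
$$\lambda^{2}-(\delta-1)\lambda\leq 2m-\delta n+\delta.$$
Solving this for $\lambda$ gives exactly $\lambda\leq\frac{\delta-1}{2}+\sqrt{2m-\delta n+\frac{(\delta+1)^{2}}{4}}$. It suffices to treat each component separately and take the component attaining $\lambda$. Inside a component, $m$ only decreases and $n$ shrinks by at least the number of vertices each removed vertex contributes to $\delta n$, while each removed vertex carries at least $\delta/2$ edges. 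So $2m-\delta n$ does not increase when passing to a component, and we may assume $G$ is connected.

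Next I would take a Perron vector $\mathbf{x}$ normalised so that its maximum entry is $x_{u}=1$. Reading the $u$-th coordinates of $A\mathbf{x}=\lambda\mathbf{x}$ and $A^{2}\mathbf{x}=\lambda^{2}\mathbf{x}$ gives
$$\lambda=\sum_{v\in N(u)}x_{v},\qquad \lambda^{2}=\sum_{w\in V(G)}c_{w}x_{w},$$
where $c_{w}=|N(w)\cap N(u)|$. Note that $c_{u}=d_{u}$. Subtracting,
$$\lambda^{2}-(\delta-1)\lambda=d_{u}+\sum_{w\in N(u)}(c_{w}-\delta+1)x_{w}+\sum_{w\notin N[u]}c_{w}x_{w}.$$

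For $w\in N(u)$ we have $c_{w}\leq d_{w}-1$, because $u\in N(w)$ but $u\notin N(u)$. Hence the middle coefficient is at most $d_{w}-\delta$, which is nonnegative, so $x_{w}\leq 1$ may be applied and the middle sum is at most $\sum_{w\in N(u)}(d_{w}-\delta)$. For $w\notin N[u]$ we use $x_{w}\leq1$ directly; the last sum is then at most $e_{G}(N(u),V(G)\setminus N[u])$. One then compares with
$$2m=d_{u}+\sum_{w\in N(u)}d_{w}+\sum_{w\notin N[u]}d_{w}$$
and with $\sum_{w\notin N[u]}d_{w}\geq\delta(n-1-d_{u})$.

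I expect this accounting to be the main obstacle. The crude bound $c_{w}\leq d_{w}$ for the vertices outside $N[u]$ loses exactly the terms $\delta$ per outside vertex that we need. So the estimate must use the edges of $G-N[u]$ as well: each outside vertex $w$ has at least $\delta-c_{w}$ neighbours not in $N(u)$, and these edges should be charged against the $-\delta$ terms. If this bookkeeping at a single vertex does not close, I would switch to Nikiforov's global route. That route sums $A^{2}\mathbf{x}=\lambda\mathbf{x}$-type identities over all vertices, using $\sum_{w}(A^{2})_{uw}=\sum_{v\in N(u)}d_{v}$ together with $\sum_{v}d_{v}^{2}$-type estimates, so that the $\delta$-deficit is accounted for across all vertices simultaneously. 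In either version, the final step is solving the quadratic inequality for $\lambda$, which is routine.
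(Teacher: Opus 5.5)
The paper gives no proof of this lemma; it only quotes it from Hong--Shu--Fang and Nikiforov. Your proposal therefore has to stand on its own, and it does not: the single-vertex argument is left open, and it cannot be closed as set up.

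Your reduction to $\lambda^2-(\delta-1)\lambda\le 2m-\delta n+\delta$ is correct. So is the passage to a component, though it is unnecessary. The trouble comes after your two estimates. At that point you need $e_G(N(u),V(G)\setminus N[u])\le\sum_{w\notin N[u]}(d_w-\delta)$, which is false in general. Charging the edges inside $G-N[u]$ cannot help, because the $-\delta$ terms are already fully used in $2m-\delta n+\delta=d_u+\sum_{w\in N(u)}(d_w-\delta)+\sum_{w\notin N[u]}(d_w-\delta)$. The real loss occurs where you discard the negative coefficients $c_w-\delta+1$ on $N(u)$.

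In fact, no estimate at the maximum vertex that uses only $0\le x_w\le 1$ can succeed. The best such bound takes $x_w=0$ where the coefficient is negative and $x_w=1$ elsewhere. It equals the target plus $\sum_{w\in N(u),\,c_w<\delta-1}(\delta-1-c_w)-\sum_{w\notin N[u]}(d_w-\delta)$, and this excess is positive in general. Take $K_{a,b}$ with $2\le a<b$. The maximum entry lies on the side of size $a$, and every $c_w$ on $N(u)$ is $0$. This best bound is then $ab$, whereas $2m-\delta n+\delta=ab-a(a-1)$. The true value $ab-(a-1)\sqrt{ab}$ lies below the target only because $x_w=\sqrt{a/b}$ on $N(u)$, which is exactly the information you threw away.

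The missing idea is to average the eigen-equation rather than read off one coordinate. Put $B=A^2-(\delta-1)A$ and let $\mathbf{x}\neq 0$ be a nonnegative eigenvector for $\lambda$. Then $B\mathbf{x}=(\lambda^2-(\delta-1)\lambda)\mathbf{x}$. Multiplying on the left by $\mathbf{1}^T$ and using $B^T=B$ gives
\[
(\lambda^2-(\delta-1)\lambda)\sum_j x_j=\sum_j r_jx_j,\qquad r_j=\sum_{w\in N(j)}d_w-(\delta-1)d_j .
\]
We have $\sum_{w\in N(j)}d_w=2m-d_j-\sum_{w\notin N[j]}d_w\le 2m-d_j-\delta(n-1-d_j)$. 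Hence every row sum satisfies $r_j\le 2m-\delta(n-1)$. Dividing by $\sum_j x_j>0$ then yields your quadratic inequality.

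This argument uses only row sums, so the possibly negative entries of $B$ never need to be controlled one at a time. It also needs no connectivity and no $\sum_v d_v^2$ estimates. This is the concrete form of the ``global route'' you allude to, and it is the piece your proposal lacks.
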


\section{A useful lemma}

For integers $k\geq1,\delta\geq4k$ and $n\geq3\delta$, let $\mathcal{G}_{n,\delta,k}$ be the set of graphs obtained from $K_{1}\vee(K_{\delta}\cup K_{n-1-\delta})$ by adding $(k-1)$ edges between $V(K_{\delta})$ and $V(K_{n-1-\delta})$.
Recall that $G_{n,\delta,k}$ (defined in Section 1) is the graph obtained from $K_{1}\vee(K_{\delta}\cup K_{n-1-\delta})$ by connecting one vertex in $V(K_{\delta})$ to  $(k-1)$ vertices in $V(K_{n-1-\delta})$. Clearly, $G_{n,\delta,k}\in\mathcal{G}_{n,\delta,k}$.

\begin{lem} \label{set-extremal}
For integers $k\geq1,\delta\geq4k$ and $n\geq3\delta$, let \( \mathcal{G}_{n,\delta,k} \) be defined as above. Then \( G_{n,\delta,k} \) is the unique extremal graph with the maximum spectral radius among the graphs in \( \mathcal{G}_{n,\delta,k} \).
\end{lem}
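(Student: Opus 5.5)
Let $G\in\mathcal{G}_{n,\delta,k}$ attain the maximum spectral radius in $\mathcal{G}_{n,\delta,k}$. Write $w$ for the vertex of $K_{1}$, $A=V(K_{\delta})$, $B=V(K_{n-1-\delta})$, and let $F$ be the set of the $k-1$ added edges between $A$ and $B$. Each such $G$ is connected, so it has a positive Perron vector $\mathbf{x}$. The plan is to apply Lemma \ref{trans} repeatedly to show that the edges of $F$ must form a star centred at a single vertex of $A$. Since $|B|=n-1-\delta\geq 2\delta-1\geq k-1$, such a star exists, and every star of this kind gives a graph isomorphic to $G_{n,\delta,k}$. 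This proves the lemma; the case $k=1$ is trivial.

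\textbf{Step 1: Perron entries within each side.} Vertices of $A$ all have the same neighbourhood apart from their $F$-edges, and the same holds for $B$. From the eigen-equations, for $a,a'\in A$,
$$(\lambda_1+1)(x_a-x_{a'})=\sum_{b\in N_F(a)}x_b-\sum_{b\in N_F(a')}x_b ,$$
and the analogous identity holds for $b,b'\in B$. Since $\lambda_{1}$ is large, the entries within $A$ differ only slightly, and the same is true within $B$. The precise statement needed is the following: if $d_F(a)\ge d_F(a')$, then $x_a\ge x_{a'}$, at least when comparing a vertex of maximum $F$-degree with the others. I would prove this by contradiction from the displayed identities.

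\textbf{Step 2: shifting edges to one centre.} Choose $a^*\in A$ with $x_{a^*}$ maximum. Suppose some $F$-edge $ab$ has $a\neq a^*$. If $a^*b\notin E(G)$, then $x_ax_b\le x_{a^*}x_b$, so replacing $ab$ by $a^*b$ strictly increases $\lambda_1$ by Lemma \ref{trans}. The new graph is still in $\mathcal{G}_{n,\delta,k}$, which contradicts maximality. The remaining case is $a^*b\in E(G)$. Here I would instead move the edge $ab$ to $a^*b'$ for some $b'\in B$ not adjacent to $a^*$ (one exists because $|B|>k-1$), choosing $b'$ with $x_{b'}$ as large as possible among such vertices. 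The required inequality is $x_ax_b\le x_{a^*}x_{b'}$.

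\textbf{Main obstacle.} The difficulty is the comparison between $x_b$ and $x_{b'}$ in the last case: vertices of $B$ incident to $F$ have slightly larger entries. To handle it, I would choose $a^*$ more carefully, namely as the vertex of maximum $F$-degree, with ties broken by $x$. Then Step 1's identities give
$$(\lambda_1+1)(x_{a^*}-x_a)\ge\sum_{N_F(a^*)}x-\sum_{N_F(a)}x .$$
I would also use the fact that each $x_b$ with $b\in B$ exceeds $x_{b'}$ by at most $O\big(k\max x_A/\lambda_1\big)$, and then compare the two products directly. If this estimate is too tight, the fallback is a direct Rayleigh-quotient argument. Given any non-star configuration, take the vertex $a^*$ of largest $F$-degree and use Lemma \ref{trans} to move a single edge into the star at $a^*$, possibly redirecting that edge to a new endpoint in $B$. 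Two options are available for the edge $ab$: moving it to $a^*b'$ or to $a^*b''$. The aim is to show that at least one of these satisfies the hypothesis of Lemma \ref{trans}. Both options preserve membership in $\mathcal{G}_{n,\delta,k}$, so either gives a contradiction. Uniqueness follows because all the inequalities obtained from Lemma \ref{trans} are strict.
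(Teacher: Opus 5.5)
\textbf{There is a genuine gap.} It sits exactly at the step you flag as the main obstacle, and it cannot be closed with Lemma \ref{trans} alone.

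Take $k\geq 3$ and let the added edges form a star centred at a vertex $b\in B$, say $F=\{a_1b,\dots,a_{k-1}b\}$. By symmetry $x_{a_1}=\cdots=x_{a_{k-1}}$. Your Step 1 identities then give $(\lambda_1+1)(x_{a_1}-x_a)=x_b>0$ for every other $a\in A$, and $(\lambda_1+1)(x_b-x_{b'})=\sum_{i<k}x_{a_i}>0$ for every $b'\in B\setminus\{b\}$. So every $F$-edge carries the maximum product $x_{a_1}x_b$ over all $A$--$B$ pairs, and every $A$--$B$ non-edge has a strictly smaller product. Consequently no switch inside $\mathcal{G}_{n,\delta,k}$ satisfies the hypothesis of Lemma \ref{trans}. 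In fact $\mathbf{x}^TA(G'')\mathbf{x}<\mathbf{x}^TA(G)\mathbf{x}$ for every $G''\in\mathcal{G}_{n,\delta,k}$ with a different set of added edges.

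In your remaining case this is visible directly. All $a_i$ have $F$-degree $1$ and equal entries, so your tie-breaking rule gives nothing: $x_{a^*}=x_a$ while $x_b>x_{b'}$. Hence $x_ax_b\leq x_{a^*}x_{b'}$ fails, and your fallback claim that one of the two redirections satisfies Lemma \ref{trans} is false here. The star centred in $B$ is a local optimum for the switching criterion, so repeated switching can never show that it loses to the star centred in $A$. (Step 1 is also only asserted, and for vertices of equal $F$-degree it is false as stated, but that is secondary.)

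The missing idea is a global comparison that uses the Perron vector of the \emph{target} graph. The paper uses Lemma \ref{trans} only to obtain a threshold shape. Write $A=\{u_1,\dots,u_\delta\}$ and $B=\{v_1,\dots,v_{n-1-\delta}\}$ with $\mathbf{x}$ non-increasing on each. Then the added edges form a down-set, so $u_1$ is adjacent to $v_1,\dots,v_s$ and $v_{s+1},v_{s+2},\dots$ have no neighbours in $A$. Your star in $B$ is the case $s=1$.

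If $s<k-1$, the paper compares $G$ with $G'=G_{n,\delta,k}$ (the star at $u_1$ on $v_1,\dots,v_{k-1}$) through $(\lambda'-\lambda)\mathbf{x}^T\mathbf{y}=\mathbf{x}^T(A(G')-A(G))\mathbf{y}$, where $\mathbf{y}$ is the Perron vector of $G'$. This reduces to showing $x_{v_{s+1}}y_{u_1}>x_{v_1}y_{u_2}$. That inequality follows from:
\begin{itemize}
\item the eigen-equations $x_{v_{s+1}}\geq(x_w+x_{v_1})/(\lambda+3+\delta-n)$ and $y_{u_2}=(y_w+y_{u_1})/(\lambda'+2-\delta)$;
\item the bound $y_w/y_{u_1}<(n-1-\delta)/k$;
\item the estimates $n-1-\delta<\lambda<n-\delta$ (upper bound from Lemma \ref{inequality}) and $\lambda'>n-1-\delta$.
\end{itemize}
It is the large entry $y_{u_1}$ at the $A$-centre that exposes the advantage of the star in $A$. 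The Perron vector of the competing graph cannot see it.
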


\f{\bf Proof:} Let \( G \) be an extremal graph with the maximum spectral radius among the graphs in \( \mathcal{G}_{n,\delta,k} \). It suffices to show that \( G = G_{n,\delta,k} \). 
The conclusion holds trivially for \( k = 1, 2 \). Thus we can assume \( k \geq 3 \) in the following discussion. 

Recall that \( G \) can be obtained from \( K_1 \vee (K_\delta \cup K_{n-1-\delta}) \) by adding \( (k-1) \) edges between \( V(K_\delta) \) and \( V(K_{n-1-\delta}) \). Denote \( V(K_1) = \{w\} \), \( V(K_\delta) = \{u_1, u_2, \ldots, u_\delta\} \) and \( V(K_{n-1-\delta}) = \{v_1, v_2, \ldots, v_{n-1-\delta}\} \). Let \( x = (x_u)_{u \in V(G)} \) be a Perron vector of \( G \). Without loss of generality, assume that \( x_{u_1} \geq x_{u_2} \geq \cdots \geq x_{u_\delta} \) and \( x_{v_1} \geq x_{v_2} \geq \cdots \geq x_{v_{n-1-\delta}} \).
Let \( s \geq 0 \) be the largest integer such that \( v_s \) is adjacent to \( u_1 \) in \( G \). 

\medskip

\f{\bf Claim 1.}  \( u_1 \) is adjacent to \( v_i \) for any \( 1 \leq i \leq s \) in \( G \). Moreover,  \( u_i v_\ell \) is not an edge of \( G \) for any \( 1\leq i \leq\delta \) and \( \ell >s \).

\medskip

\f{\bf Proof of Claim 1.} It suffices to show that if $u_{i}v_{j}$ is an edge of $G$, then $u_{i'}v_{j'}$ is also an edge for any $i'\leq i$ and $j'\leq j$. Suppose not. Let  \( G_1 \) be the graph obtained from \( G \) by deleting the edge $u_{i}v_{j}$ and adding the non-edge $u_{i'}v_{j'}$. Clearly, \( G_1 \) is in \( \mathcal{G}_{n,\delta,k} \). Since $x_{u_{i}}x_{v_{j}} \leq x_{u_{i'}}x_{v_{j'}}$, we have \( \lambda_1(G_1) > \lambda_1(G) \) by Lemma \ref{trans}. But this contradicts the choice of \( G \). This finishes the proof of Claim 1. \hfill$\Box$

\medskip

Denote \( \lambda_1(G) = \lambda \). By symmetry, we have \( x_{v_{s+1}} = x_v \) for any \( s + 2 \leq i \leq n - 1 - \delta \) from Claim 1. By \( A(G)x = \lambda x \), we have
\[
\lambda x_{v_{s+1}} = x_w + x_{v_1} + \left( \sum_{2 \leq i \leq s} x_{v_i} \right) + (n - 2 - \delta - s)x_{v_{s+1}} \geq x_w + x_{v_1} + (n - 3 - \delta)x_{v_{s+1}}.
\]
It follows that 
$$ x_{v_{s+1}} \geq \frac{x_w + x_{v_1}}{\lambda + 3 + \delta - n}.$$

Let \( G' \) be the graph obtained from \( K_1 \vee (K_\delta \cup K_{n-1-\delta}) \) by connecting the vertex \( u_1 \) to the vertices \( v_1, v_2, \ldots, v_{k-1} \). Clearly, \( G' = G_{n,\delta,k} \). It suffices to prove that \( G = G' \). If \( s = k - 1 \), then \( G = G' \), as desired. Now assume that \( 1 \leq s < k - 1 \). We will obtain a contradiction. Denote  \( \lambda_1(G') = \lambda' \). Let \( y = (y_u)_{u \in V(G')} \) be a Perron vector of \( G' \). By symmetry, we have \( y_{u_2} = y_{u_j} \) for any \( 3 \leq j \leq \delta \) and \( y_{v_1} = y_{v_j} \) for any \( 2 \leq j \leq k - 1 \).  By \( A(G')y = \lambda'y \), we have
\[
\lambda'y_{u_2} = y_w + y_{u_1} + (\delta - 2)y_{u_2}.
\]
It follows that 
$$y_{u_2} = \frac{y_w + y_{u_1}}{\lambda' + 2 - \delta}.$$

\medskip

\f{\bf Claim 2.}  \( \frac{y_w}{y_{u_1}} < \frac{n-1-\delta}{k} \).

\medskip

\f{\bf Proof of Claim 2.} By \( A(G')y = \lambda'y \), we have
\[
\lambda'y_w \leq (n - 1 - \delta)y_{v_1} + y_{u_1} + (\delta - 1)y_{u_2}
\]
and
\[
\lambda'y_{u_1} = y_w + (k - 1)y_{v_1} + (\delta - 1)y_{u_2} \geq ky_{v_1} + (\delta - 1)y_{u_2}.
\]
Thus,
\[
\frac{y_w}{y_{u_1}} = \frac{\lambda'y_w}{\lambda'y_{u_1}} \leq \frac{(n - 1 - \delta)y_{v_1} + y_{u_1} + (\delta - 1)y_{u_2}}{ky_{v_1} + (\delta - 1)y_{u_2}}.
\]
Clearly, 
$$\lambda'y_{u_1} \leq ky_w + (\delta - 1)y_{u_2}$$ 
and
\[
\lambda'y_{u_2} = y_w + y_{u_1} + (\delta - 2)y_{u_2} \geq y_w + (\delta - 1)y_{u_2}.
\]
Thus,
\[
\frac{y_{u_1}}{y_{u_2}} = \frac{\lambda'y_{u_1}}{\lambda'y_{u_2}} \leq \frac{ky_w + (\delta - 1)y_{u_2}}{y_w + (\delta - 1)y_{u_2}} \leq k.
\]
It follows that 
$$\frac{y_{u_1} + (\delta - 1)y_{u_2}}{(\delta - 1)y_{u_2}} \leq \frac{k+\delta-1}{\delta-1}.$$
Since \( k \leq \frac{\delta}{4} \) and \( n \geq 3\delta \), we have
$$\frac{(n-1-\delta)y_{v_1}}{ky_{v_1}} = \frac{n-1-\delta}{k} > \frac{k+\delta-1}{\delta-1}
\geq\frac{y_{u_1} + (\delta - 1)y_{u_2}}{(\delta - 1)y_{u_2}}.$$ 
Then
\[
\frac{y_w}{y_{u_1}} \leq \frac{(n - 1 - \delta)y_{v_1} + y_{u_1} + (\delta - 1)y_{u_2}}{ky_{v_1} + (\delta - 1)y_{u_2}}<\frac{(n-1-\delta)y_{v_1}}{ky_{v_1}} = \frac{n - 1 - \delta}{k},
\]
as desired. \hfill$\Box$

\medskip

By Lemma \ref{subgraph}, we have \( \lambda' > n - 1 - \delta \), since \( G' \) contains a copy of \( K_{n-\delta} \). Clearly, 
$$2e(G) = (n - \delta)(n - \delta - 1) + \delta(\delta - 1) + 2(\delta + k - 1).$$
 By Lemma \ref{inequality}, noting \( n \geq 3\delta \), it is easy to check that
\[
\lambda = \lambda_1(G) \leq \frac{\delta - 1}{2} + \sqrt{2e(G) - n\delta + \left( \frac{\delta + 1}{4} \right)^2} < n - \delta.
\]
Clearly, if \( u_i v_j \) is an edge of \( G \) and is not an edge of \( G' \), then \( 2 \leq i \leq \delta \) and \( 1 \leq j \leq s \).  
Recall that \( \frac{y_w}{y_{u_1}} < \frac{n-1-\delta}{k} \) by Claim 2. Trivially, \( \frac{x_w}{x_{v_1}} \geq 1 \). Note that 
$$\frac{2}{3} >\frac{\frac{n-1-\delta}{k}}{n-2\delta+1}$$
 as \( k \geq 3 \) and \( n \geq 3\delta \).  
Thus,  
\[
(\lambda' - \lambda)x^T y = x^T (A(G') - A(G))y
\]
\[
= \left( \sum_{u_1 v_i \in (E(G') - E(G))} (x_{u_1} y_{v_i} + x_{v_i} y_{u_1}) \right) - \sum_{u_i v_j \in (E(G) - E(G'))} (x_{u_i} y_{v_j} + x_{v_j} y_{u_i})
\]
\[
\geq (k-1-s)(x_{u_1} y_{v_1} + x_{v_{s+1}} y_{u_1} - x_{u_2} y_{v_1} - x_{v_1} y_{u_2})
\]
\[
\geq (k-1-s)(x_{v_{s+1}} y_{u_1} - x_{v_1} y_{u_2})
\]
\[
\geq (k-1-s)\left( \frac{x_w + x_{v_1}}{\lambda + 3 + \delta - n} y_{u_1} - \frac{y_w + y_{u_1}}{\lambda' + 2 - \delta} x_{v_1} \right)
\]
\[
= (k-1-s)x_{v_1} y_{u_1} \left( \frac{\frac{x_w}{x_{v_1}} + 1}{\lambda + 3 + \delta - n} - \frac{\frac{y_w}{y_{u_1}} + 1}{\lambda' + 2 - \delta} \right)
\]
\[
> (k-1-s)x_{v_1} y_{u_1} \left( \frac{1+1}{3} - \frac{\frac{n-1-\delta}{k}}{n-2\delta+1} \right)
> 0.\]
That is, \( (\lambda' - \lambda)x^T y > 0 \), implying that \( \lambda' > \lambda \). But this contradicts the choice of \( G \). So, the proof is completed. \hfill$\Box$

\section{Proof of Theorem \ref{main}}

To prove Theorem \ref{main}, we first introduce a result from matroid theory. For basic definitions on  matroid, one may refer to the book \cite{O}.

Let $G$ be a graph with edge set $E=E(G)$. For any $F\subseteq E$, let $G(F)$ denote the spanning subgraph of $G$ with edge set $F$. 
Let $$\mathcal{I}=\left\{F\subseteq E~|~G(F)~ is~a~forest\right\}.$$
 The {\em circuit matroid} $\mathcal{M}(G)$ of $G$ is the matroid on the ground set $E(G)$ with independent set $\mathcal{I}$. The rank function of $\mathcal{M}(G)$ is given by $$r_{\mathcal{M}}(F)=|V(G)|-c(F),$$
  where $F\subseteq E$ and $c(F)$ denotes the number of components of $G(F)$.  Clearly, $G$ contains a spanning tree if and only if $r_{\mathcal{M}}(E)=|V(G)|-1$.

Let $F$ be a non-empty subset of $E$, and let $S$ be the set of vertices
incident with some edge in $F$. Set $H=G(F)$. The edge set $F$ is called
{\em independent} in $G$, if $e(H[X])\leq2|X|-3$ for all $X\subseteq V(G)$ with $|X|\geq2$. 
Let $$\mathcal{I}=\left\{F\subseteq E~|~F~ is~independent~in~G\right\}.$$
 Jackson and Jord\'{a}n \cite{JJ} proved that $\mathcal{R}(G)=(E,\mathcal{I})$ is a matroid on the ground set $E$ with independent set $\mathcal{I}$, which is called the {\em rigid matroid} of $G$.   The rank function of $\mathcal{R}(G)$ is given by  
 $$r_{\mathcal{R}}(F)=\min\left\{\sum_{1\leq i\leq t}(2|X_{i}|-3)\right\},$$
 where the minimum is taken over all collections of subsets $X_{1},X_{2},...,X_{t}$ of $V(G)$ such that $\left\{E(G[X_{1}]),E(G[X_{2}]),...,E(G[X_{t}])\right\}$ partitions $F$. (Clearly, $|X_{i}|\geq2$ for any $1\leq i\leq t$.)
$G$ is called rigid if $r_{\mathcal{R}}(E)=2|V(G)|-3$. Using a result of Laman \cite{Laman}, Jackson and Jord\'{a}n \cite{JJ} stated that these definitions agree with the geometric definitions for rigidity in $\mathbb{R}^{2}$ given in the introduction.

Let $G$ be a graph of order $n$ with edge set $E$. As in \cite{CDG}, let $\mathcal{N}_{k,\ell}$ denote the matroid on the ground set $E$ obtained by taking the matroid union of $k$ copies of the rigid matroids $\mathcal{R}(G)$ and $\ell$ copies of circuit matroids $\mathcal{M}(G)$. For $E'\subseteq E$, let $r_{k,\ell}(E')$ be the rank of $E'$. By a result of Edmonds on the rank of matroid union (see \cite{E} or Chapter 42 of \cite{S} for details), it can be obtained that
$$r_{k,\ell}(E)=\min_{F\subseteq E}\left\{k\cdot r_{\mathcal{R}}(F)+\ell\cdot r_{\mathcal{M}}(F)+|E-F|\right\}.$$
It is known that $r_{k,\ell}(E)\leq k(2n-3)+\ell(n-1)$, with equality if and only if $G$ contains edge-disjoint $k$ spanning rigid subgraphs and $\ell$ spanning trees. Thus, $G$ contains edge-disjoint $k$ spanning rigid subgraphs and $\ell$ spanning trees if and only if
$$\min_{F\subseteq E}\left\{k\cdot r_{\mathcal{R}}(F)+\ell\cdot r_{\mathcal{M}}(F)+|E-F|\right\}=k(2n-3)+\ell(n-1)~~~(*1).$$

Now we are ready to prove Theorem \ref{main}. (The formula $(*1)$ is very important in the proof of Theorem \ref{main}.)

\medskip

\f{\bf Proof of Theorem \ref{main}.}   Assume that $\lambda_{1}(G)\geq\lambda_{1}(G_{n,\delta,k})$ and $G$ is not $G_{n,\delta,k}$. We shall
prove that G contains edge-disjoint $k$ spanning rigid subgraphs and $\left\lfloor\frac{\delta-4k}{2}\right\rfloor$ spanning trees.
By Lemma \ref{subgraph}, we have \( \lambda_{1}(G)\geq\lambda_{1}(G_{n,\delta,k}) > n - 1 - \delta \), since \( G_{n,\delta,k}\) contains a copy of \( K_{n-\delta} \). 
 By Lemma \ref{inequality}, we have that
$$\lambda_1(G) \leq \frac{\delta - 1}{2} + \sqrt{2e(G) - n\delta + \left( \frac{\delta + 1}{4} \right)^2}.$$
Combining this with $\lambda_{1}(G)>n-1-\delta$, we can obtain that 
$$2e(G)>n^{2}-(2\delta+1)n+2\delta^{2}+\delta.$$
Then
 $$2e(\overline{G})<2\delta n-2\delta^{2}-\delta.$$

\medskip

\f{\bf Claim 1.} $G$ is $\delta$-edge-connected and $G-u$ is $k$-edge-connected for any vertex $u$ of $G$.

\medskip

\f{\bf Proof of Claim 1.} We  first show that $G$ is $\delta$-edge-connected. If $G$ is not $\delta$-edge-connected, then there is a $D\subseteq V(G)$, such that $e_{G}(D,V(G)-D)\leq\delta-1$. It is easy to show $|D|\geq\delta+1$ and $|V(G)-D|\geq\delta+1$. Then, noting $n\geq3\delta$,
\begin{equation}
\begin{aligned}
2e(\overline{G})&\geq\left(\sum_{v\in D}d_{\overline{G}}(v)\right)+\sum_{v\in (V(G)-D)}d_{\overline{G}}(v)\\
&\geq (n-|D|)|D|-(\delta-1)+|D|(n-|D|)-(\delta-1)\\
&\geq(\delta+1)(n-1-\delta)-(\delta-1)+(\delta+1)(n-1-\delta)-(\delta-1)\\
&=2(\delta+1)(n-1-\delta)-2(\delta-1)\\
&>2\delta n-2\delta^{2}-\delta.
\end{aligned}\notag
\end{equation}
This contradicts the fact that $2e(\overline{G})<2\delta n-2\delta^{2}-\delta$. Thus $G$ is $\delta$-edge-connected.

Now we show the second part of Claim 1. Suppose that there is a vertex $u$ of $G$ such that $G-u$ is not $k$-edge-connected. Let $V'=V(G-u)$. Then there is an $O\subseteq V'$, such that $e_{G-u}(O,V'-O)\leq k-1$. It is easy to show  $|O|\geq\delta$ and $|V'-O|\geq\delta$ as $\delta(G-u)\geq\delta-1$. 
If $|O|\geq\delta+1$ and $|V'-O|\geq\delta+1$. Then, noting $n\geq3\delta$ and $1\leq k\leq\frac{\delta}{4}$,
\begin{equation}
\begin{aligned}
2e(\overline{G})&\geq\left(\sum_{v\in O}d_{\overline{G}}(v)\right)+\sum_{v\in (V'-O)}d_{\overline{G}}(v)\\
&\geq (n-1-|O|)|O|-(k-1)+|O|(n-1-|O|)-(k-1)\\
&=2(\delta+1)(n-2-\delta)-2(k-1)\\
&\geq2\delta n-2\delta^{2}-\delta.
\end{aligned}\notag
\end{equation}
This contradicts the fact that $2e(\overline{G})<2\delta n-2\delta^{2}-\delta$. Thus, without loss of generality, we can assume that $|O|=\delta$. Let $G_{1}$ be the graph obtained from $G$ by adding edges such that $d_{G_{1}}(u)=n-1,e_{G-u}(O,V'-O)= k-1$ and both $G_{1}[O]$ and $G_{1}[V'-O]$ are complete graphs. Clearly, $G_{1}$ is in  $\mathcal{G}_{n,\delta,k}$.  By Lemma \ref{set-extremal}, we have $\lambda_{1}(G_{1})\leq\lambda_{1}(G_{n,\delta,k})$, with equality if and only if $G_{1}=G_{n,\delta,k}$. Note that $\lambda_{1}(G)\leq\lambda_{1}(G_{1})$ as $G$ is a subgraph of $G_{1}$. Consequently, we must have $G=G_{n,\delta,k}$ as $\lambda_{1}(G)\geq\lambda_{1}(G_{n,\delta,k})$, a contradiction. Thus $G-u$ is $k$-edge-connected for any vertex $u$ of $G$. This finishes the proof of Claim 1. \hfill$\Box$

\medskip

Now we prove the theorem by contradiction. Suppose that $G$ does not contain edge-disjoint $k$ spanning rigid subgraphs and $\left\lfloor\frac{\delta-4k}{2}\right\rfloor$ spanning trees. Set $E=E(G)$. Then by $(*1)$, there is a $F\subseteq E$ such that
$$k\cdot r_{\mathcal{R}}(F)+\left\lfloor\frac{\delta-4k}{2}\right\rfloor\cdot r_{\mathcal{M}}(F)+|E-F|<k(2n-3)+\left\lfloor\frac{\delta-4k}{2}\right\rfloor(n-1)~~~(*2).$$
By the definition of rank function, there is a collection $\mathcal{B}=\left\{B_{1},B_{2},...,B_{h}\right\}$ of subsets of $V(G)$ such that $\left\{E(G[B_{1}]),E(G[B_{2}]),...,E(G[B_{h}])\right\}$ partitions $F$, and 
$$ r_{\mathcal{R}}(F)=\sum_{B\in\mathcal{B}}(2|B|-3).$$
Recall that $r_{\mathcal{M}}(F)=n-c(F)$, where $c(F)$ denotes the number of components of $G(F)$.
Then $(*2)$ becomes
$$k\left(\sum_{B\in\mathcal{B}}(2|B|-3)\right)+\left\lfloor\frac{\delta-4k}{2}\right\rfloor (n-c(F))+|E-F|<k(2n-3)+\left\lfloor\frac{\delta-4k}{2}\right\rfloor(n-1).$$
It follows that
$$k\left(\sum_{B\in\mathcal{B}}(2|B|-3)\right)+|E-F|<k(2n-3)+
\left\lfloor\frac{\delta-4k}{2}\right\rfloor(c(F)-1)~~~(*3).$$
We can choose such an $F$ with minimum size $|F|$ that  $(*3)$ holds. 

\medskip

\f{\bf Claim 2.} $|B|\geq3$ for any $B$ of $\mathcal{B}$.

\medskip

\f{\bf Proof of Claim 2.} Suppose that $|B_{i}|=2$ for some $1\leq i\leq h$. Let $F'=F-\left\{e_{i}\right\}$ and $\mathcal{B}'=\mathcal{B}-\left\{B_{i}\right\}$, where $e_{i}$ is the (unique) edge contained in $G[B_{i}]$. Then the edges inside the elements of $\mathcal{B}'$ partition $F'$. Clearly, $c(F')\geq c(F)$. Then by $(*3)$, noting $k\geq1$ and $2|B_{i}|-3=1$, we have
$$k\left(\sum_{B\in\mathcal{B}'}(2|B|-3)\right)+
|E-F'|\leq k\left(\sum_{B\in\mathcal{B}}(2|B|-3)\right)+|E-F|$$
$$<k(2n-3)+
\left\lfloor\frac{\delta-4k}{2}\right\rfloor(c(F)-1)\leq
k(2n-3)+\left\lfloor\frac{\delta-4k}{2}\right\rfloor(c(F')-1).$$
Thus, $F'$ satisfies $(*3)$. But this contradicts the choice of $F$. Hence, we proved that $|B|\geq3$ for any $B$ of $\mathcal{B}$. \hfill$\Box$ 

\medskip

We call each $B$ of $\mathcal{B}$ a stone.

\medskip

Let $H=G(F)$ and $c=c(F)$. Let $Q_{1},Q_{2},...,Q_{c_{0}}$ be the non-singleton components of $H$, where $0\leq c_{0}\leq c$. 
Thus $(*3)$ becomes
$$k\left(\left(\sum_{B\in\mathcal{B}}(4|B|-6)\right)-4n\right)+2|E-F|
<-6k+2\left\lfloor\frac{\delta-4k}{2}\right\rfloor(c-1)\leq-6k+(\delta-4k)(c-1)~~(*4).$$
Clearly, each non-singleton component of $H$ consists of some stones in $\mathcal{B}$ and the edges inside them. Let $V_{0}$ be the set of isolated vertices of $H$, and let $V_{1}=V(G)-V_{0}$. Then $c_{0}+|V_{0}|=c$ and $V_{1}=\cup_{B\in\mathcal{B}}B$.

For any $u\in V_{1}$, let $r_{u}$ be the number of stones in $\mathcal{B}$ including the vertex $u$. Let $W=\left\{u\in V_{1}~|~r_{u}\geq2\right\}$ and $L=\left\{u\in V_{1}~|~r_{u}=1\right\}$. Then $V_{1}=W\cup L$. For any $u\in V(G)$, let $a_{u}$ be the number of edges in $E-F$ incident with the vertex $u$. For any $B$ in $\mathcal{B}$, 
let $$a_{B\cap L}=\sum_{u\in B\cap L}a_{u}.$$
 For any $0\leq j\leq c_{0}$, let $$b_{j}=\sum_{u\in V(Q_{j})\cap W}a_{u}.$$
 Set $n_{1}=|V_{1}|$. Then $n_{1}=n-(c-c_{0})$. For any $1\leq j\leq c_{0}$, 
 let $$I(Q_{j})=b_{j}+\sum_{B\subseteq V(Q_{j})}(a_{B\cap L}+2k|B\cap W|-6k).$$ 
 
 \medskip

\f{\bf Claim 3.} $\sum_{1\leq j\leq c_{0}}I(Q_{j})<(\delta-4k)(c_{0}-1)-6k$.

\medskip

\f{\bf Proof of Claim 3.}  Since
$$\sum_{B\in\mathcal{B}}\sum_{u\in B}\frac{1}{r_{u}}=\sum_{u\in V_{1}}\sum_{B\ni u,B\in\mathcal{B}}\frac{1}{r_{u}}=\sum_{u\in V_{1}}1=n_{1},$$
we have
\begin{equation}
\begin{aligned}
\left(\sum_{B\in\mathcal{B}}(4|B|-6)\right)-4n_{1}&=\left(\sum_{B\in\mathcal{B}}(4|B|-6)\right)
-4\sum_{B\in\mathcal{B}}\sum_{u\in B}\frac{1}{r_{u}}\\
&=\sum_{B\in\mathcal{B}}\left(4|B|-\left(\sum_{u\in B}\frac{4}{r_{u}}\right)-6\right)\\
&=\sum_{B\in\mathcal{B}}\left(\left(\sum_{u\in B}4(1-\frac{1}{r_{u}})\right)-6\right)\\
&\geq\sum_{B\in\mathcal{B}}\left(\left(\sum_{u\in B\cap W}4(1-\frac{1}{2})\right)-6\right)\\
&=\sum_{B\in\mathcal{B}}(2|B\cap W|-6)\\
&=\sum_{1\leq j\leq c_{0}}\sum_{B\subseteq V(Q_{j})}(2|B\cap W|-6)~~~(*5).
\end{aligned}\notag
\end{equation}
Note that $a_{u}=d_{G}(u)$ for any $u\in V_{0}$. Clearly,
$$2|E-F|=\left(\sum_{u\in V_{0}}a_{u}\right)+\sum_{1\leq j\leq c_{0}}\sum_{u\in V(Q_{j})}a_{u}=\left(\sum_{u\in V_{0}}a_{u}\right)+\sum_{1\leq j\leq c_{0}}\left(b_{j}+\sum_{B\subseteq V(Q_{j})}a_{B\cap L}\right)~~~(*6).$$
By $(*4),(*5)$ and $(*6)$, noting $a_{u}\geq\delta$ for any $u\in V_{0}$, we have
\begin{equation}
\begin{aligned}
&-6k+(\delta-4k)(c-1)\\
&>-4k(c-c_{0})+k\left(\sum_{1\leq j\leq c_{0}}\sum_{B\subseteq V(Q_{j})}(2|B\cap W|-6)\right)+\left(\sum_{u\in V_{0}}a_{u}\right)+\sum_{1\leq j\leq c_{0}}\left(b_{j}+\sum_{B\subseteq V(Q_{j})}a_{B\cap L}\right)\\
&\geq(c-c_{0})(\delta-4k)+\sum_{1\leq j\leq c_{0}}\left(b_{j}+\sum_{B\subseteq V(Q_{j})}\left(a_{B\cap L}+2k|B\cap W|-6k\right)\right)\\
&=(c-c_{0})(\delta-4k)+\sum_{1\leq j\leq c_{0}}I(Q_{j}).
\end{aligned}\notag
\end{equation}
It follows that
 $$\sum_{1\leq j\leq c_{0}}I(Q_{j})<(\delta-4k)(c_{0}-1)-6k.$$
This finishes the proof of Claim 3. \hfill$\Box$
 
 \medskip
 
By Claim 3, we see $c_{0}\geq1$, otherwise $0<-6k$.
A stone $B$ in $\mathcal{B}$ is called {\em loose}, if $|B\cap W|\leq2$ and $a_{B\cap L}+2k|B\cap W|-6k<\delta-4k$.

\medskip

\f{\bf Claim 4.} Let $B$ be a loose stone. Then we have the following conclusions.\\
$(1)$ If $|B\cap W|=0$, then $|B|\geq\delta$ and $a_{B\cap L}<\delta+2k$.\\
$(2)$ If $|B\cap W|=1$, then $|B|\geq\delta$ and $a_{B\cap L}<\delta$.\\
$(3)$ If $|B\cap W|=2$, then $|B|\geq\delta+1$ and $a_{B\cap L}<\delta-2k$.

\medskip

\f{\bf Proof of Claim 4.} We first show that $|B|\geq\delta$. Suppose that $|B|=p<\delta$. Recall that $p\geq3$. Let $|B\cap W|=x$, where $x\leq2$. Then $|B\cap L|=p-x$. Since the minimum degree of $G$ is $\delta$, we have $a_{B\cap L}\geq(\delta+1-p)(p-x)$. Since $\delta\geq4k,0\leq x\leq2$ and $3\leq p\leq \delta-1$, it is easy to check that 
$$(\delta+1-p)(p-x)+2kx\geq\delta-2+4k.$$ 
 Thus, noting  $k\geq1$,
  $$a_{B\cap L}+2k|B\cap W|-6k=(\delta+1-p)(p-x)+2kx-6k\geq\delta-2-2k\geq\delta-4k.$$
   This contradicts the fact that $B$ is loose. Hence we proved $|B|\geq\delta$.
   
Now $(1)$ and $(2)$ clearly hold. To show $(3)$, assume that $|B|=\delta$ by contradiction. Note that $|B\cap W|=2$ in this case. Clearly, $a_{B\cap L}\geq\delta-2$.
Thus, 
$$a_{B\cap L}+2k|B\cap W|-6k\geq\delta-2+4k-6k\geq\delta-2-2k\geq\delta-4k.$$
 This contradicts the fact that $B$ is loose. Thus, $|B|\geq\delta+1$. It holds clearly that $a_{B\cap L}<\delta-2k$ as $B$ is loose. This finishes the proof of Claim 4. \hfill$\Box$

\medskip

\f{\bf Claim 5.} There are at most two loose stones.

\medskip

\f{\bf Proof of Claim 5.} Suppose that there are at least three loose stones, say $B_{1},B_{2},B_{3}$. Then $|B_{i}\cap L|\geq\delta-1$ for any $1\leq i\leq 3$ by Claim 4. Let $B$ be any loose stone. Without loss of generality, we can assume that $B\neq B_{1},B_{2}$. Then $n-|B|\geq |B_{1}\cap L|+|B_{2}\cap L|\geq2(\delta-1)$. It follows that $|B|\leq n-\delta$. Now we show that
$$\sum_{u\in B\cap L}d_{\overline{G}}(u)\geq(\delta-1)(n-\delta-1)-(\delta-2k).$$
  If $|B\cap W|=0$, by $(1)$ of Claim 4, we have $|B|\geq\delta$, and
\begin{equation}
\begin{aligned}
\sum_{u\in B\cap L}d_{\overline{G}}(u)&\geq(n-|B|)|B\cap L|-a_{B\cap L}\\
&=(n-|B|)|B|-a_{B\cap L}\\
&\geq\delta(n-\delta)-(\delta+2k)\\
&\geq (\delta-1)(n-\delta-1)-(\delta-2k),
\end{aligned}\notag
\end{equation}
as desired.
  If $|B\cap W|=1$, by $(2)$ of Claim 4, we have
  $$\sum_{u\in B\cap L}d_{\overline{G}}(u)\geq(n-|B|)(|B|-1)-a_{B\cap L}\geq(\delta-1)(n-\delta)-\delta\geq(\delta-1)(n-\delta-1)-(\delta-2k),$$
  as desired.
  If $|B\cap W|=2$, by $(3)$ of Claim 4, we have $|B|\geq\delta+1$, and
 $$\sum_{u\in B\cap L}d_{\overline{G}}(u)\geq(n-|B|)(|B|-2)-a_{B\cap L}\geq(\delta-1)(n-\delta-1)-(\delta-2k),$$
 as desired.
Hence, we proved $$\sum_{u\in B_{i}\cap L}d_{\overline{G}}(u)\geq(\delta-1)(n-\delta-1)-(\delta-2k)$$
for any $1\leq i\leq3$.
Then, noting $n\geq3\delta$ and $\delta\geq6$, we have
\begin{equation}
\begin{aligned}
2e(\overline{G})&\geq\sum_{1\leq i\leq3}\sum_{u\in B_{i}\cap L}d_{\overline{G}}(u)\\
&\geq3(\delta-1)(n-\delta-1)-3(\delta-2k)\\
&>2\delta n-2\delta^{2}-\delta.
\end{aligned}\notag
\end{equation}
This contradicts the fact that $2e(\overline{G})<2\delta n-2\delta^{2}-\delta$.
 This finishes the proof of Claim 5. \hfill$\Box$

\medskip

 Now we have the following two case.
 
 \medskip
 
\f{\bf Case 1}. $c=1$.

\medskip

Then $c_{0}=1$ as $1\leq c_{0}\leq c=1$.
By Claim 3, we have $I(Q_{1})<-6k$. If $Q_{1}$ consists of exactly one stone, say $B_{1}$, then $I(Q_{1})=b_{1}+(a_{B_{1}\cap L}+2k|B_{1}\cap W|-6k)\geq-6k$, a contradiction. Hence, $Q_{1}$ consists of at least two stones. For each stone $B$ of $Q_{1}$, clearly $|B\cap W|\geq1$ as $c=1$. Recall 
$$I(Q_{1})=b_{1}+\sum_{B\subseteq V(Q_{1})}(a_{B\cap L}+2k|B\cap W|-6k).$$
 Clearly, $a_{B\cap L}+2k|B\cap W|-6k\geq0$ if $B$ is not loose. Now let $B$ be a loose stone. If $|B\cap W|=2$, then $a_{B\cap L}+2k|B\cap W|-6k\geq-2k$. If $|B\cap W|=1$, then $a_{B\cap L}\geq k$ since $G-u$ is $k$-edge-connected for any $u\in V(G)$ by Claim 1. It follows that $a_{B\cap L}+2k|B\cap W|-6k\geq-3k$.
Since there are at most two loose stones by Claim 5,  we have
$$I(Q_{1})=b_{1}+\sum_{B\subseteq V(Q_{1})}(a_{B\cap L}+2k|B\cap W|-6k)\geq-3k-3k=-6k,$$
 a contradiction.

\medskip

\f{\bf Case 2}. $c\geq2$.

\medskip

\f{\bf Claim 6.} For any given $1\leq \ell\leq c_{0}$, recall that $Q_{\ell}$ is a non-singleton component of $H$. Then we have the following conclusions.\\
$(1)$ $I(Q_{\ell})\geq\delta-4k$ if $Q_{\ell}$ contains no loose stone. \\
$(2)$ $I(Q_{\ell})\geq\delta-7k$ if $Q_{\ell}$ contains exactly one loose stone. \\
$(3)$ $I(Q_{\ell})\geq-6k$ if $Q_{\ell}$ contains exactly two loose stones.

\medskip

\f{\bf Proof of Claim 6.} $(1)$ Assume that $Q_{\ell}$ contains no loose stone. Then $|B\cap W|\geq3$ or $a_{B\cap L}+2k|B\cap W|-6k\geq\delta-4k$ for any stone $B$ of $Q_{\ell}$. In either case, we have 
$$a_{B\cap L}+2k|B\cap W|-6k\geq0.$$
 First consider that there is at least one stone of $Q_{\ell}$, say $B_{1}$, such that $a_{B_{1}\cap L}+2k|B_{1}\cap W|-6k\geq\delta-4k$. Then
\begin{equation}
\begin{aligned}
I(Q_{\ell})&=b_{\ell}+\sum_{B\subseteq V(Q_{\ell})}(a_{B\cap L}+2k|B\cap W|-6k)\\
&\geq(a_{B_{1}\cap L}+2k|B_{1}\cap W|-6k)+\sum_{B\subseteq V(Q_{\ell}),B\neq B_{1}}(a_{B\cap L}+2k|B\cap W|-6k)\\
&\geq\delta-4k+0\\
&=\delta-4k,
\end{aligned}\notag
\end{equation}
 as desired.

Now it remains to consider that  $a_{B\cap L}+2k|B\cap W|-6k<\delta-4k$ for any stone $B$ of $Q_{\ell}$. Then  $|B\cap W|\geq3$ for each stone $B$ of $Q_{\ell}$, since  $Q_{\ell}$ contains no loose stone. Since $G$ is $\delta$-edge-connected by Claim 1, we have $b_{\ell}+\sum_{B\subseteq V(Q_{\ell})}a_{B\cap L}\geq\delta$. Then
$$I(Q_{\ell})=b_{\ell}+\sum_{B\subseteq V(Q_{\ell})}(a_{B\cap L}+2k|B\cap W|-6k)\geq b_{\ell}+\sum_{B\subseteq V(Q_{\ell})}a_{B\cap L}\geq\delta,$$ as desired.

\medskip

$(2)$ Assume that $Q_{\ell}$ contains exactly one loose stone. Let $B_{1},B_{2},...,B_{t}$ be the stones of $Q_{\ell}$. Without loss of generality, assume that $B_{1}$ is the only one loose stone of $Q_{\ell}$. If $t=1$, then $|B_{1}\cap W|=0$ and $a_{B_{1}\cap L}\geq\delta$ as $G$ is $\delta$-edge-connected.
Thus $$I(Q_{\ell})=b_{\ell}+\sum_{B\subseteq V(Q_{\ell})}(a_{B\cap L}+2k|B\cap W|-6k)=b_{\ell}+(a_{B_{1}\cap L}+2k|B_{1}\cap W|-6k)\geq\delta-6k,$$
 as desired.
 Now let $t\geq2$. Then $|B_{i}\cap W|\geq3$ or $a_{B_{i}\cap L}+2k|B_{i}\cap W|-6k\geq\delta-4k$ for any $2\leq i\leq t$. In either case, we have $a_{B_{i}\cap L}+2k|B_{i}\cap W|-6k\geq0$ for any $2\leq i\leq t$.

Suppose that there is at least one non-loose stone of $Q_{\ell}$, say $B_{2}$, such that $$a_{B_{2}\cap L}+2k|B_{2}\cap W|-6k\geq\delta-4k.$$
 Note that $|B_{1}\cap W|\geq1$. Moreover, $a_{B_{1}\cap L}\geq k$ if $|B_{1}\cap W|=1$, since $G-u$ is $k$-edge-connected for any $u\in V(G)$ by Claim 1. It follows that $a_{B_{1}\cap L}+2k|B_{1}\cap W|-6k\geq-3k$.
 Then
 \begin{equation}
\begin{aligned}
I(Q_{\ell})&=b_{\ell}+\sum_{B\subseteq V(Q_{\ell})}(a_{B\cap L}+2k|B\cap W|-6k)\\
&\geq(a_{B_{1}\cap L}+2k|B_{1}\cap W|-6k)+\sum_{B\subseteq V(Q_{\ell}),B\neq B_{1}}(a_{B\cap L}+2k|B\cap W|-6k)\\
&\geq-3k+(\delta-4k)\\
&=\delta-7k,
\end{aligned}\notag
\end{equation}
 as desired.

Now it remains to consider that  $|B_{i}\cap W|\geq3$ for each $2\leq i\leq t$. Since $G$ is $\delta$-edge-connected, we have $b_{\ell}+\sum_{B\subseteq V(Q_{\ell})}a_{B\cap L}\geq\delta$.
 Then
 \begin{equation}
\begin{aligned}
I(Q_{\ell})&=b_{\ell}+\sum_{B\subseteq V(Q_{\ell})}(a_{B\cap L}+2k|B\cap W|-6k)\\
&=(2k|B_{1}\cap W|-6k)+\left(\sum_{2\leq i\leq t}(2k|B_{i}\cap W|-6k)\right)+\left(b_{\ell}+\sum_{B\subseteq V(Q_{\ell})}a_{B\cap L}\right)\\
&\geq -4k+0+\delta\\
&=\delta-4k,
\end{aligned}\notag
\end{equation}
as desired.

\medskip

$(3)$ Assume that $Q_{\ell}$ contains exactly two loose stones. Let $B_{1},B_{2},...,B_{t}$ be the stones of $Q_{\ell}$. Without loss of generality, assume that $B_{1}$ and $B_{2}$ are the only two loose stones of $Q_{\ell}$. Note that  $|B_{i}\cap W|\geq1$ for $1\leq i\leq2$. Thus $a_{B_{i}\cap L}+2k|B_{i}\cap W|-6k\geq-3k$, since $G-u$ is $k$-edge-connected for any $u\in V(G)$. Similar to (2),  we have $a_{B_{i}\cap L}+2k|B_{i}\cap W|-6k\geq0$ for any $3\leq i\leq t$.
Then
 \begin{equation}
\begin{aligned}
I(Q_{\ell})&=b_{\ell}+\left(\sum_{1\leq i\leq 2}(a_{B_{i}\cap L}+2k|B_{i}\cap W|-6k)\right)+\left(\sum_{3\leq i\leq t}(a_{B_{i}\cap L}+2k|B_{i}\cap W|-6k)\right)\\
&\geq0+2(-3k)+0\\
&=-6k,
\end{aligned}\notag
\end{equation}
 as desired.
This finishes the proof of Claim 6. \hfill$\Box$

\medskip

Recall that 
$$\sum_{1\leq j\leq c_{0}}I(Q_{j})<(\delta-4k)(c_{0}-1)-6k$$
 by Claim 3. Moreover, there are at most two loose stones by Claim 5. We still have the following 3 subcases.

\medskip

\f{\bf Subcase 2.1.} There are exactly two loose stones, say $B_{1}$ and $B_{2}$.

\medskip

Suppose that $B_{1}$ and $B_{2}$ are contained in one component of $H$, say $Q_{1}$. Then $I(Q_{1})\geq-6k$ by $(3)$ of Claim 6, and $I(Q_{i})\geq\delta-4k$ for any $2\leq i\leq c_{0}$ by $(1)$ of Claim 6. Thus
 $$\sum_{1\leq j\leq c_{0}}I(Q_{j})=I(Q_{1})+\sum_{2\leq j\leq c_{0}}I(Q_{j})\geq-6k+(\delta-4k)(c_{0}-1),$$
  a contradiction to Claim 3.

 It remains that $B_{1}$ and $B_{2}$ are contained in different components of $H$, say $Q_{1}$ and $Q_{2}$. Then $I(Q_{i})\geq\delta-7k$ for $1\leq i\leq2$ by $(2)$ of Claim 6, and $I(Q_{i})\geq\delta-4k$ for any $3\leq i\leq c_{0}$ by $(1)$ of Claim 6.
 Thus, noting $\delta\geq4k$, we have
 \begin{equation}
\begin{aligned}
\sum_{1\leq j\leq c_{0}}I(Q_{j})&=\left(\sum_{1\leq j\leq 2}I(Q_{j})\right)+\left(\sum_{3\leq j\leq c_{0}}I(Q_{j})\right)\\
&\geq2(\delta-7k)+(\delta-4k)(c_{0}-2)\\
&=\delta-10k+(\delta-4k)(c_{0}-1)\\
&\geq-6k+(\delta-4k)(c_{0}-1),
\end{aligned}\notag
\end{equation}
 a contradiction.

\medskip

\f{\bf Subcase 2.2.} There is exactly one loose stone of $H$, say $B_{1}$.

\medskip

Without loss of generality, assume that $B_{1}$ is contained in $Q_{1}$. Then $I(Q_{1})\geq\delta-7k$ by $(2)$ of Claim 6, and $I(Q_{i})\geq\delta-4k$ for any $2\leq i\leq c_{0}$ by $(1)$ of Claim 6. Thus
  \begin{equation}
\begin{aligned}
\sum_{1\leq j\leq c_{0}}I(Q_{j})&=I(Q_{1})+\sum_{2\leq j\leq c_{0}}I(Q_{j})\\
&\geq(\delta-7k)+(\delta-4k)(c_{0}-1)\\
&>-6k+(\delta-4k)(c_{0}-1),
\end{aligned}\notag
\end{equation}
a contradiction.

\medskip

\f{\bf Subcase 2.3.} There is no loose stone.

\medskip

By $(1)$ of Claim 6, we have $I(Q_{i})\geq\delta-4k$ for any $1\leq i\leq c_{0}$. Then
  $$\sum_{1\leq j\leq c_{0}}I(Q_{j})\geq(\delta-4k)c_{0}>-6k+(\delta-4k)(c_{0}-1),$$
   a contradiction.
This completes the proof. \hfill$\Box$

\medskip

\medskip

\medskip

\f{\bf Data availability statement}

\medskip

There is no associated data.

\medskip

\f{\bf Declaration of Interest Statement}

\medskip

There is no conflict of interest.

\end{document}